\documentclass[11pt]{amsart}

\usepackage[T1]{fontenc}
\usepackage[utf8]{inputenc}
\usepackage{lmodern}
\usepackage{amsmath,amssymb,amsthm,mathtools}
\usepackage[margin=1in]{geometry}
\usepackage{tikz-cd}

\usepackage[colorlinks=true,linkcolor=blue,citecolor=blue,urlcolor=blue]{hyperref}

\newtheorem{theorem}{Theorem}[section]
\newtheorem{proposition}[theorem]{Proposition}
\newtheorem{corollary}[theorem]{Corollary}
\newtheorem{lemma}[theorem]{Lemma}
\theoremstyle{definition}
\newtheorem{definition}[theorem]{Definition}
\newtheorem{example}[theorem]{Example}
\theoremstyle{remark}
\newtheorem{remark}[theorem]{Remark}

\newcommand{\F}{\mathbb{F}}
\newcommand{\Z}{\mathbb{Z}}
\newcommand{\Index}{\operatorname{Index}}
\newcommand{\id}{\operatorname{id}}
\newcommand{\cI}{\mathcal{I}}
\newcommand{\cS}{\mathcal{S}}

\title{A Cohomological Obstruction to the Existence of $\phi$-Equivariant Maps}
\author[T. F. V. Paiva]{Thales Fernando Vilamaior Paiva}
\address{Universidade Federal de Mato Grosso do Sul, C\^ampus de Aquidauana \\ CEP 79200-000, Aquidauana - MS, Brazil }
\email{thales.paiva@ufms.br}

\date{}

\begin{document}

\subjclass[2020]{Primary 55N91; Secondary 55T10, 57S10.}

\keywords{$\phi$-equivariant map; equivariant cohomology; partial
Fadell--Husseini index; Leray--Serre spectral sequence; compact Lie group.}

\begin{abstract}
Let $\phi\colon G\to H$ be a continuous homomorphism between compact Lie
groups, let $X$ be a $G$-space, and let $Y$ be an $H$-space.  We formulate a
filtered change-of-groups obstruction to the existence of a
$\phi$-equivariant map $X\to Y$. We apply them
to the inclusion of a finite cyclic group into the circle and to
homomorphisms between tori acting coordinatewise on products of
odd-dimensional spheres.
\end{abstract}

\maketitle

\section{Introduction}

The existence of equivariant maps is a classical problem in transformation
group theory, and its simplest instances already include results of
Borsuk--Ulam type \cite{Borsuk1933,Dold1983}; general references on compact
transformation groups and equivariant homotopy theory include
\cite{Bredon1972,tomDieck1987,May1996}.  If a compact Lie group $G$ acts on
spaces $X$ and $Y$, cohomological index theories obstruct the existence of a
$G$-equivariant map $X\to Y$ through functorial properties of the associated
Borel constructions.  The ideal-valued index of Fadell and Husseini
\cite{FadellHusseini1988}, its page-wise partial versions
\cite[Section~6.1]{BlagojevicLuckZiegler2015}, and the numerical index of
Volovikov \cite{Volovikov2000} have been used in several forms of this
problem; see, for example, \cite{CoelhoMattosSantos2012}.

In this paper, we consider compact Lie groups $G$ and $H$, a continuous
homomorphism $\phi\colon G\to H$, a $G$-space $X$, and an $H$-space $Y$.  A
map $f\colon X\to Y$ is said to be $\phi$-equivariant when
\[
f(gx)=\phi(g)f(x),\,\,\text{ for every }\,\,g\in G
\text{ and }x\in X,\]
where the conventional notion of a $G$-equivariant map is obtained by taking $G=H$ and
$\phi=\id_G$, whereas the general definition allows the acting group to
change through subgroup inclusions, quotient homomorphisms, or other
continuous homomorphisms.  In this setting, the induced map
\[
(B\phi)^*:H^*(BH;R)\to H^*(BG;R)
\]
must be incorporated into any
cohomological comparison between $X$ and $Y$.

The motivation for this approach comes from the extension problem studied in
\cite{Paiva2025}.  Indeed, if $G\subseteq H$ and an action of $H$ on $X$
extends a given action of $G$, then the identity map of $X$ is
$\iota$-equivariant for the inclusion $\iota\colon G\hookrightarrow H$.
Remark~3.8 of \cite{Paiva2025} suggests that a class killed by a differential
in one Borel spectral sequence, whose restriction remains nonzero in the
other, should obstruct such an extension.  The present paper develops this
observation for arbitrary $\phi$-equivariant maps and may therefore be viewed
as a continuation of the question raised there.

For a fixed acting group, the page-wise kernels of the Borel spectral
sequence are commonly called partial Fadell--Husseini indices.  Their
behavior under group automorphisms and $\phi$-equivariant self-equivalences
was considered in \cite[Lemma~7.2]{BlagojevicLuckZiegler2015}.  Here we
retain the notation $\cI_r^K(Z;R)$ for these ideals and establish their
change-of-groups monotonicity for arbitrary continuous homomorphisms and
possibly different source and target spaces.  Our main result (Theorem \ref{thm:filtered-monotonicity}) states that
the existence of a $\phi$-equivariant map $X\to Y$ implies
\[
 (B\phi)^*\bigl(\cI_r^H(Y;R)\bigr)\subseteq \cI_r^G(X;R)
 \quad\text{for }2\le r\le\infty.
\]
At the limiting page this inclusion gives the corresponding relation between
the Fadell--Husseini indices, while the finite pages retain the differential
information that motivated the construction.  After establishing this
obstruction, we apply it to the inclusion
$C_p\cong\Z/p\Z\hookrightarrow S^1$ and to
homomorphisms between tori acting on products of odd-dimensional spheres; the
latter calculation gives a change-of-groups extension of the
torus example in \cite{CoelhoMattosSantos2012}.

\section{The conventional equivariant-map problem}
\label{sec:conventional}

Throughout the paper, all groups are compact Lie groups, all actions are
continuous, and $R$ is a commutative ring with unit.  We work with spaces for
which the Borel fibrations admit natural multiplicative Leray--Serre spectral
sequences.  Whenever the limiting page is used, strong convergence to Borel
cohomology is also assumed.  These hypotheses hold, in particular, for the
finite equivariant CW complexes occurring in the examples below and in the
standard finitistic setting considered in \cite{Paiva2025}.  Local
coefficient systems will be displayed whenever they are relevant.

Let $K$ act on a space $Z$, choose a contractible free $K$-space $EK$, and
write 
\[Z_K=(EK\times Z)/K=EK\times_KZ
\]
for the Borel construction and
$BK=EK/K$ for the classifying space relative to $K$.  The projection
$\pi_Z\colon Z_K\to BK$ determines the Borel fibration
$Z\to Z_K\to BK$, whose cohomological Leray--Serre spectral sequence will be
denoted by $E_r(Z_K;R)$.  Following the conventions of \cite{McCleary2001},
its second page is
\[
 E_2^{p,q}(Z_K;R)
 \cong H^p\bigl(BK;\mathcal H^q(Z;R)\bigr),
\]
where the local coefficient system is induced by the action of $K$ on
$H^q(Z;R)$.  When $Z$ is path connected, the bottom row is canonically
identified with $E_2^{p,0}(Z_K;R)\cong H^p(BK;R)$.

Suppose now that $X$ and $Y$ are $G$-spaces and $f\colon X\to Y$ is a
$G$-equivariant map.  The rule $f_G[e,x]=[e,f(x)]$ defines a map
$f_G\colon X_G\to Y_G$ over the identity of $BG$ and consequently induces a
contravariant morphism from $E_r(Y_G;R)$ to $E_r(X_G;R)$ whose restriction to
the bottom row on the second page is the identity of $H^*(BG;R)$.  

Recall that the Fadell--Husseini index \cite{FadellHusseini1988} is the
homogeneous ideal
\[
\Index_G(Z;R)
=\ker\bigl(H^*(BG;R)\rightarrow H_G^*(Z;R)\bigr),
\]
hence the morphism at the limit
gives the relation
\[
 \Index_G(Y;R)\subseteq\Index_G(X;R).
\]

This conventional case contains the central idea of the present paper.  The
existence of an equivariant map prevents a cohomology class of $BG$ from
disappearing earlier in the target spectral sequence than in the source
spectral sequence.  For a general homomorphism $\phi\colon G\to H$, the same
principle holds after pulling the class back along $B\phi$.

\section{\texorpdfstring{$\phi$-equivariant maps}{phi-equivariant maps} and Borel constructions}
\label{sec:phi}

\begin{definition}
Let $\phi\colon G\to H$ be a continuous homomorphism, let $X$ be a $G$-space,
and let $Y$ be an $H$-space.  A continuous map $f\colon X\to Y$ is
\emph{$\phi$-equivariant} if $f(gx)=\phi(g)f(x)$ for every $g\in G$ and
$x\in X$.
\end{definition}

Choose compatible models of the universal spaces and a
$\phi$-equivariant map $E\phi\colon EG\to EH$; its quotient is a model for
$B\phi\colon BG\to BH$.  If $f\colon X\to Y$ is $\phi$-equivariant, then
$f_\phi\colon X_G\to Y_H$, defined by
$f_\phi[e,x]=[E\phi(e),f(x)]$, is well defined and gives the following map
of Borel fibrations:
\[
\begin{tikzcd}
X \arrow[r] \arrow[d,"f"'] & X_G \arrow[r,"\pi_X"]
  \arrow[d,"f_\phi"] & BG \arrow[d,"B\phi"] \\
Y \arrow[r] & Y_H \arrow[r,"\pi_Y"'] & BH.
\end{tikzcd}
\]
The induced homotopy class of $f_\phi$ is independent of these choices; this
is the usual change-of-groups map in Borel cohomology.

In accordance with the notation of \cite{Paiva2025}, let $D_r^{p,q}$ denote
the spectral sequence associated with $Y\to Y_H\to BH$ and let $E_r^{p,q}$
denote the spectral sequence associated with $X\to X_G\to BG$.  Their second
pages and convergence are described by
\begin{align*}
 D_2^{p,q}
 &=H^p\bigl(BH;\mathcal H^q(Y;R)\bigr)
   \Rightarrow H_H^{p+q}(Y;R),\\
 E_2^{p,q}
 &=H^p\bigl(BG;\mathcal H^q(X;R)\bigr)
   \Rightarrow H_G^{p+q}(X;R).
\end{align*}
Naturality produces a morphism
$\psi_r^{p,q}\colon D_r^{p,q}\to E_r^{p,q}$ commuting with the differentials,
and on the second page this morphism is induced by $B\phi$ together with the
morphism of local coefficient systems determined by $f^*$.  In particular,
if $X$ and $Y$ are path connected, then
\[
\psi_2^{p,0}=(B\phi)^*\colon H^p(BH;R)\to H^p(BG;R).
\]
The map
\[
 h=h_{(Y_H,X_G;R)}=f_\phi^*\colon
 H_H^*(Y;R)\longrightarrow H_G^*(X;R)
\]
preserves the Serre filtrations, and $\psi_\infty$ is the induced morphism
on the associated graded groups.  Thus
$\operatorname{gr}(h)=\psi_\infty$; in general, $\psi_\infty$ should not be
identified with the unfiltered homomorphism $h$ without resolving the
extension data.  This is the cohomological comparison considered in
\cite{Paiva2025}.

\section{A filtered cohomological obstruction}
\label{sec:obstruction}

Because the Borel spectral sequence is first-quadrant, no differential leaves
its bottom row, and hence there are canonical quotient maps
$E_r^{*,0}(Z_K;R)\to E_s^{*,0}(Z_K;R),$ whenever
$2\le r\le s\le\infty$.  In particular, the identification
$E_2^{*,0}(Z_K;R)=H^*(BK;R)$ determines a homomorphism
\[
\rho_r^{K,Z}\colon H^*(BK;R)\to E_r^{*,0}(Z_K;R).
\]

\begin{definition}
Let $K$ be a compact Lie group and $Z$ be a path-connected $K$-space $Z.$ For each $2\le r\le\infty$, define the
\emph{$r$-th partial Borel index} by
\[
 \cI_r^K(Z;R)
 =\ker\bigl(\rho_r^{K,Z}\colon H^*(BK;R)
       \rightarrow E_r^{*,0}(Z_K;R)\bigr).
\]
\end{definition}

For finite $r$, this is the partial Fadell--Husseini index used in
\cite[Section~6.1]{BlagojevicLuckZiegler2015}; the notation $\cI_r^K(Z;R)$
emphasizes the filtration page while avoiding a second index superscript.
Under the multiplicative hypotheses above, $\cI_r^K(Z;R)$ is a homogeneous
ideal, with
\[
 0=\cI_2^K(Z;R)\subseteq\cI_3^K(Z;R)\subseteq\cdots
 \subseteq\cI_\infty^K(Z;R).
\]
Thus $\cI_r^K(Z;R)$ records precisely the
classes in the bottom row that have been hit by incoming differentials before
the $r$-th page.

\begin{theorem}
\label{thm:filtered-monotonicity}
Let $\phi\colon G\to H$ be a continuous homomorphism, let $X$ be a
path-connected $G$-space, and let $Y$ be a path-connected $H$-space.  If there
exists a $\phi$-equivariant map $f\colon X\to Y$, then, for every
$2\le r\le\infty$,
\[
 (B\phi)^*\bigl(\cI_r^H(Y;R)\bigr)
 \subseteq \cI_r^G(X;R).
\]
\end{theorem}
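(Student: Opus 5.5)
The inclusion will follow from naturality of the Leray--Serre spectral sequence under the map of Borel fibrations $(f,f_\phi,B\phi)$ constructed in Section~\ref{sec:phi}. The key observation is that the morphism $\psi_r\colon D_r\to E_r$ commutes with the canonical quotient maps on the bottom row. I will therefore show that the square
\[
\begin{tikzcd}
H^*(BH;R) \arrow[r,"\rho_r^{H,Y}"] \arrow[d,"(B\phi)^*"'] & D_r^{*,0} \arrow[d,"\psi_r^{*,0}"] \\
H^*(BG;R) \arrow[r,"\rho_r^{G,X}"'] & E_r^{*,0}
\end{tikzcd}
\]
commutes for every $2\le r\le\infty$. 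Granting this, if $a\in\cI_r^H(Y;R)$, then
\[
\rho_r^{G,X}\bigl((B\phi)^*a\bigr)=\psi_r^{*,0}\bigl(\rho_r^{H,Y}(a)\bigr)=\psi_r^{*,0}(0)=0,
\]
so $(B\phi)^*a\in\cI_r^G(X;R)$, which is the claim.

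I will prove commutativity by induction on $r$. For $r=2$, path-connectedness of $X$ and $Y$ gives $\psi_2^{p,0}=(B\phi)^*$ under the identifications $D_2^{p,0}=H^p(BH;R)$ and $E_2^{p,0}=H^p(BG;R)$, as recorded in Section~\ref{sec:phi}. Here $\rho_2$ is the identity, so the square commutes.

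For the inductive step, note that in a first-quadrant spectral sequence the bottom row has no outgoing differentials. Hence $D_{r+1}^{*,0}=D_r^{*,0}/\operatorname{im}(d_r)$, and similarly for $E$. Moreover $\rho_{r+1}$ is $\rho_r$ followed by this projection. Since $\psi_r$ is a chain map, $\psi_{r+1}$ is the map induced on cohomology. On the bottom row this is just the map induced by $\psi_r^{*,0}$ on the quotients, because $\psi_r$ carries $\operatorname{im}(d_r^D)$ into $\operatorname{im}(d_r^E)$. So the square for $r+1$ follows from the square for $r$.

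For $r=\infty$, I take $E_\infty^{*,0}$ to be the intersection/colimit stage of the bottom row. Since the grading is bounded, in each fixed degree $p$ only the differentials $d_r$ with $r\le p$ can hit $E^{p,0}$. Hence $E_\infty^{p,0}=E_{p+1}^{p,0}$, the square at $r=\infty$ is the square at $r=p+1$ degreewise, and $\psi_\infty$ agrees with the stabilized $\psi_r$. Alternatively, $\cI_\infty$ can be read off as the kernel of the edge homomorphism $H^*(BK)\to H^*_K(Z)$ composed with the filtration, and then compared via $f_\phi^*\circ\pi_Y^*=\pi_X^*\circ(B\phi)^*$.

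The main obstacle is the base case. One must make sure that the naturality morphism $\psi_2$ on the bottom row really is $(B\phi)^*$ when the coefficient systems $\mathcal H^0$ are identified with the constant system $R$. This uses that $f^*\colon H^0(Y;R)\to H^0(X;R)$ is the identity of $R$ for path-connected spaces, and that these systems are untwisted. It is essentially stated in Section~\ref{sec:phi}, so I would cite it there and only remark on why $\mathcal H^0$ is constant.
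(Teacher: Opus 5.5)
Your proposal is correct and follows the same route as the paper, whose proof is exactly the naturality argument: the morphism of spectral sequences induced by $f_\phi$ sends the image of $\ell$ in $D_r^{*,0}$ to the image of $(B\phi)^*\ell$ in $E_r^{*,0}$. Your induction on $r$, together with the degreewise stabilization $E_\infty^{p,0}=E_{p+1}^{p,0}$, just spells out the commutativity of the bottom-row square that the paper uses without comment.
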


\begin{proof}
Let $\ell\in\cI_r^H(Y;R)$.  Thus the image of $\ell$ in
$D_r^{*,0}=E_r^{*,0}(Y_H;R)$ is zero.  The morphism of spectral sequences
induced by $f_\phi$ carries this image to the image of $(B\phi)^*\ell$ in
$E_r^{*,0}(X_G;R)$.  The latter image is therefore zero, which means that
$(B\phi)^*\ell\in\cI_r^G(X;R)$.
\end{proof}

When $G=H$, $\phi$ is an automorphism, and $f$ is a
$\phi$-equivariant self-homotopy equivalence, applying the theorem also to a
$\phi^{-1}$-equivariant homotopy inverse recovers the equality in
\cite[Lemma~7.2]{BlagojevicLuckZiegler2015}.  In the present result, neither
the acting groups nor the spaces are required to coincide.

Motivated by Theorem~\ref{thm:filtered-monotonicity}, we introduce the
following obstruction set.

\begin{definition}
For $2\le r\le\infty$, set
\[
 \cS_{\phi,R}^{(r)}(Y_H,X_G)
 =\left\{\ell\in\cI_r^H(Y;R)\ \,;\, \ 
 (B\phi)^*\ell\notin\cI_r^G(X;R)\right\}.
\]
The elements of this set will be called \emph{$r$-page witnesses} for the
nonexistence of a $\phi$-equivariant map from $X$ to $Y$.
\end{definition}

\begin{corollary}[Obstruction criterion]
\label{cor:witness}
If $\cS_{\phi,R}^{(r)}(Y_H,X_G)\ne\emptyset$ for some
$2\le r\le\infty$, then there is no $\phi$-equivariant map $X\to Y$.
\end{corollary}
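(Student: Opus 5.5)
The plan is to argue by contraposition and reduce everything to Theorem~\ref{thm:filtered-monotonicity}. Assume there is a $\phi$-equivariant map $f\colon X\to Y$. Here $X$ is a path-connected $G$-space and $Y$ is a path-connected $H$-space, as required by the definition of the partial Borel indices. Fix any $r$ with $2\le r\le\infty$ and any $\ell\in\cI_r^H(Y;R)$. Theorem~\ref{thm:filtered-monotonicity} then gives $(B\phi)^*\ell\in(B\phi)^*\bigl(\cI_r^H(Y;R)\bigr)\subseteq\cI_r^G(X;R)$. So no element of $\cI_r^H(Y;R)$ meets the defining condition of $\cS_{\phi,R}^{(r)}(Y_H,X_G)$, and that set is empty for every $r$. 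This contradicts the hypothesis that it is nonempty for some $r$.

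Spelled out as a direct argument: take a witness $\ell\in\cS_{\phi,R}^{(r)}(Y_H,X_G)$. Such an $\ell$ lies in $\cI_r^H(Y;R)$, yet $(B\phi)^*\ell\notin\cI_r^G(X;R)$. This means
\[
 (B\phi)^*\bigl(\cI_r^H(Y;R)\bigr)\not\subseteq\cI_r^G(X;R),
\]
which is exactly the negation of the conclusion of Theorem~\ref{thm:filtered-monotonicity} at page $r$. The theorem's hypothesis must therefore fail, so no $\phi$-equivariant map $X\to Y$ exists.

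There is no genuine obstacle; the corollary is a formal restatement. The only thing to check is that the standing hypotheses under which the ideals $\cI_r^K$ and the naturality morphism $\psi_r$ are defined are the same as those needed to invoke the theorem. These are path-connectedness, natural multiplicative Leray--Serre spectral sequences, and, when $r=\infty$, strong convergence. All of them are implicit in the definition of the witness set.
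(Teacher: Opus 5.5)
Your proposal is correct and matches the paper's intent: the paper gives no separate proof of this corollary, treating it as the immediate contrapositive of Theorem~\ref{thm:filtered-monotonicity}. A witness $\ell$ directly violates the inclusion $(B\phi)^*\bigl(\cI_r^H(Y;R)\bigr)\subseteq\cI_r^G(X;R)$ that any $\phi$-equivariant map would force, which is exactly your argument.
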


\begin{remark}[The conventional case]
If $G=H$ and $\phi=\id_G$, then Theorem~\ref{thm:filtered-monotonicity}
reduces to $\cI_r^G(Y;R)\subseteq\cI_r^G(X;R)$, so the partial index refines
the usual same-group monotonicity condition by retaining the page on which
each base class disappears.
\end{remark}

\begin{remark}[The limiting ideal]
Under the strong-convergence hypothesis, the bottom edge homomorphism of the
Borel spectral sequence is
$\pi_Z^*\colon H^*(BK;R)\to H_K^*(Z;R)$.  In total degree $p$, the Serre
filtration satisfies $F^{p+1}H_K^p(Z;R)=0$, and hence
$E_\infty^{p,0}\cong F^pH_K^p(Z;R)$.  It follows that the kernel of the edge
homomorphism is exactly the set of bottom-row classes that vanish at
$E_\infty$ \cite{McCleary2001}.
Consequently,
$\cI_\infty^K(Z;R)=\Index_K(Z;R)$.  The limiting instance of
Theorem~\ref{thm:filtered-monotonicity} is therefore
\[
 (B\phi)^*\bigl(\Index_H(Y;R)\bigr)
 \subseteq\Index_G(X;R),
\]
the natural change-of-groups form of Fadell--Husseini monotonicity.
\end{remark}

\begin{proposition}
\label{prop:differential}
Let $\ell\in H^n(BH;R)$.  Suppose that its representative
$\rho_s^{H,Y}(\ell)\in D_s^{n,0}$ is nonzero and that there exists
$\eta\in D_s^{n-s,s-1}$ such that
$d_s(\eta)=\rho_s^{H,Y}(\ell)$.  If the class
$\rho_{s+1}^{G,X}((B\phi)^*\ell)$ is nonzero in $E_{s+1}^{n,0}$, then no
$\phi$-equivariant map $X\to Y$ exists.
\end{proposition}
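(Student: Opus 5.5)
The plan is to reduce the statement to Corollary~\ref{cor:witness} with $r=s+1$. I will show that $\ell$ is an $(s+1)$-page witness, that is, $\ell\in\cS_{\phi,R}^{(s+1)}(Y_H,X_G)$. Two facts are needed: $\ell\in\cI_{s+1}^H(Y;R)$ and $(B\phi)^*\ell\notin\cI_{s+1}^G(X;R)$. Once both hold, Corollary~\ref{cor:witness}, and hence Theorem~\ref{thm:filtered-monotonicity}, rules out any $\phi$-equivariant map $X\to Y$.

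First I check that $\ell\in\cI_{s+1}^H(Y;R)$. The spectral sequence is first quadrant, so no differential leaves the bottom row. Hence $D_{s+1}^{n,0}$ is the quotient of $D_s^{n,0}$ by the image of $d_s\colon D_s^{n-s,s-1}\to D_s^{n,0}$. The canonical quotient map $D_s^{*,0}\to D_{s+1}^{*,0}$ carries $\rho_s^{H,Y}(\ell)$ to $\rho_{s+1}^{H,Y}(\ell)$, by the definition of the maps $\rho_r$ as composites of these quotients. By hypothesis $\rho_s^{H,Y}(\ell)=d_s(\eta)$ lies in that image, so $\rho_{s+1}^{H,Y}(\ell)=0$, which is exactly $\ell\in\cI_{s+1}^H(Y;R)$.

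Second, $(B\phi)^*\ell\notin\cI_{s+1}^G(X;R)$ is a direct restatement of the hypothesis that $\rho_{s+1}^{G,X}((B\phi)^*\ell)\neq0$ in $E_{s+1}^{n,0}$. Both conditions for membership in $\cS_{\phi,R}^{(s+1)}(Y_H,X_G)$ therefore hold.

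The only mildly delicate point is the first step: one must confirm that the passage $D_s\to D_{s+1}$ on the bottom row really is the quotient by $\operatorname{im} d_s$. This needs $d_s$ to vanish on $D_s^{n,0}$, since its target lies in row $1-s<0$. It also needs the identification of $D_{s+1}$ with the cohomology of $(D_s,d_s)$. The nonvanishing assumption on $\rho_s^{H,Y}(\ell)$ is not used in the argument; it only records that $\ell$ dies exactly on page $s+1$.
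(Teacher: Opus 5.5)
Your proposal is correct and follows the paper's proof: you show that $\ell\in\cS_{\phi,R}^{(s+1)}(Y_H,X_G)$ and then apply Corollary~\ref{cor:witness}. Your argument is more explicit than the paper's in checking that $D_{s+1}^{n,0}=D_s^{n,0}/\operatorname{im} d_s$, and you are right that the hypothesis $\rho_s^{H,Y}(\ell)\neq0$ plays no role in the argument.
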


\begin{proof}
The first hypothesis says that the class represented by $\ell$ belongs to
$\cI_{s+1}^H(Y;R)$, while the second says that its pullback does not belong to
$\cI_{s+1}^G(X;R)$.  Hence $\ell$ determines an element of
$\cS_{\phi,R}^{(s+1)}(Y_H,X_G)$, and the conclusion follows from
Corollary~\ref{cor:witness}.
\end{proof}

\begin{remark}[Relation with the motivating question]
The situation considered in Remark~3.8 of \cite{Paiva2025} requires, in
effect, a class that is killed in one spectral sequence while its pullback
survives in the other.  Proposition~\ref{prop:differential} shows that survival
to $E_\infty$ is stronger than necessary.  In fact, after a class is killed
by $d_s$ in the target sequence, nonvanishing of its pullback on $E_{s+1}$
already contradicts naturality.  The partial indices isolate precisely this
finite-page phenomenon.
\end{remark}

\section{Examples}
\label{sec:examples}

\subsection{The inclusion \texorpdfstring{$C_p\hookrightarrow S^1$}{Cp into S1}}
\label{subsec:cyclic-circle}

We give a model calculation showing that the filtered obstruction can be
sharp.  Let $p$ be an odd prime and let
\[
 C_p=\{z\in S^1\,;\,z^p=1\}
   =\langle e^{2\pi i/p}\rangle
\]
be the subgroup of $p$-th roots of unity in $S^1$.  The map
$[k]\mapsto e^{2\pi i k/p}$ identifies $C_p$ with the cyclic group
$\Z/p\Z$.  Let $\phi\colon C_p\hookrightarrow S^1$ be the inclusion and let us
consider $X=S^{2n-1}\subset\mathbb C^n$ with the scalar action of $C_p,$ and
$Y=S^{2m-1}\subset\mathbb C^m$ with the scalar action of $S^1$.  With
coefficients in $\F_p$, it is well know (cf. \cite{Hatcher2002}) that the identification
$BS^1\simeq\mathbb CP^\infty$ gives 
\[
 H^*(BS^1;\F_p)=\F_p[c],
\] 
where $c$ is a homogeneous element of $\deg c=2$.  On the other hand, the
standard computation of the cohomology of a cyclic group gives
\cite{AdemMilgram2004}
\[
 H^*(BC_p;\F_p)=\Lambda(a)\otimes\F_p[b],
\] 
where $\deg a=1$, $\deg b=2$ and $b$ is the Bockstein of $a$.  Restriction of
the universal complex line bundle, together with naturality of the first
Chern class, gives $(B\phi)^*(c)=b$
\cite{AdemMilgram2004,MilnorStasheff1974}.

The Borel fibration associated with the scalar $S^1$-action on
$S^{2m-1}$ is the unit-sphere bundle of the $m$-fold sum of the universal
complex line bundle.  In the Leray--Serre spectral sequence of an oriented
sphere bundle, the transgression of the fiber fundamental class is the Euler
class of the corresponding vector bundle
\cite{McCleary2001,MilnorStasheff1974}.  Thus, if $u$ generates
$H^{2m-1}(S^{2m-1};\F_p)$, then $d_{2m}(u)=c^m$; similarly, if $v$ generates
$H^{2n-1}(S^{2n-1};\F_p)$, then the source spectral sequence satisfies
$d_{2n}(v)=b^n$.

\begin{proposition}
\label{prop:spheres}
For the actions above, there exists a $\phi$-equivariant map
$S^{2n-1}\to S^{2m-1}$ if and only if $n\le m$.
\end{proposition}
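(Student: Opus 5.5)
The plan is to treat the two directions separately: an explicit construction for existence, and Proposition~\ref{prop:differential} for non-existence.

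\emph{Existence when $n\le m$.} We use the standard inclusion $f\colon S^{2n-1}\to S^{2m-1}$, $f(z_1,\dots,z_n)=(z_1,\dots,z_n,0,\dots,0)$. It is continuous and norm preserving. For $\zeta\in C_p$ we have $f(\zeta z)=\zeta f(z)=\phi(\zeta)f(z)$, because both actions are scalar multiplication. Hence $f$ is $\phi$-equivariant. No cohomology is needed here.

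\emph{Non-existence when $n>m$.} The idea is to exhibit a witness as in Proposition~\ref{prop:differential}, with $\ell=c^m\in H^{2m}(BS^1;\F_p)$ and $s=2m$.

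First, the target sequence. Both spaces are path connected for $n,m\ge1$, and the fibers are spheres. The local coefficient systems are trivial, since $S^1$ and $BC_p$-type monodromy act on $H^{2m-1}(S^{2m-1})$ through the connected group $S^1$, respectively through complex-linear maps preserving orientation. Therefore $D_2=\F_p[c]\otimes\Lambda(u)$, and the only possible differential is $d_{2m}$. Consequently $\rho^{S^1,Y}_{2m}(c^m)=c^m\neq0$ in $D_{2m}^{2m,0}=D_2^{2m,0}$. The transgression recalled above gives $d_{2m}(u)=c^m$ with $u\in D_{2m}^{0,2m-1}$, which is $\eta$ in the case $s=2m$.

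Second, the source sequence. We have $(B\phi)^*(c^m)=b^m$, and we must show $\rho^{C_p,X}_{2m+1}(b^m)\ne0$ in $E_{2m+1}^{2m,0}$. Again $E_2=\Lambda(a)\otimes\F_p[b]\otimes\Lambda(v)$, with $v$ in fiber degree $2n-1$. The only nonzero rows are $q=0$ and $q=2n-1$. So a differential hitting the bottom row must start in row $2n-1$ and is a $d_{2n}$. Since $2n>2m+1$ when $n>m$ (indeed $2n\ge 2m+2$), no differential $d_r$ with $r\le 2m$ reaches the bottom row. Hence $E_{2m+1}^{*,0}=E_2^{*,0}=H^*(BC_p;\F_p)$, and $b^m\neq0$ there, because $\F_p[b]$ is a polynomial algebra.

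Proposition~\ref{prop:differential} then shows that no $\phi$-equivariant map exists. Equivalently, $c^m\in\cS^{(2m+1)}_{\phi,\F_p}(Y_{S^1},X_{C_p})$ and Corollary~\ref{cor:witness} applies.

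\emph{Main obstacle.} The delicate points are the triviality of the local coefficients and the identification $d_{2m}(u)=c^m$ together with $(B\phi)^*c=b$. All three are supplied by the discussion preceding the statement, so I would cite it rather than reprove it. The page bookkeeping then reduces to the observation that $2n-1\ge 2m+1>2m-1$.
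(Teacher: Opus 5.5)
Your proposal is correct and follows essentially the paper's proof. For $n\le m$ you use the coordinate inclusion. For $m<n$ you take the class $c^m$, which is killed by $d_{2m}$ in the target spectral sequence. It pulls back to $b^m$, and $b^m$ survives to $E_{2m+1}$ because the only differential that can reach the source bottom row is $d_{2n}$, with $2n\ge 2m+2$.

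Routing this through Proposition~\ref{prop:differential} with $s=2m$, instead of directly through Corollary~\ref{cor:witness}, is only a cosmetic difference. Your explicit check that the local coefficients are trivial fills in a detail the paper leaves implicit.
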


\begin{proof}
Suppose first that $m<n$.  The class $c^m$ is killed by $d_{2m}$ in the
spectral sequence for $Y_{S^1}$, and hence
$c^m\in\cI_{2m+1}^{S^1}(Y;\F_p)$.  Its pullback is $b^m$, which remains
nonzero on the $(2m+1)$-st page because the first differential that can kill
a nonzero polynomial class in the source bottom row is $d_{2n}$ and
$2m<2n$.  Therefore
$b^m\notin\cI_{2m+1}^{C_p}(X;\F_p)$, so $c^m$ belongs to
$\cS_{\phi,\F_p}^{(2m+1)}(Y_{S^1},X_{C_p})$ and obstructs the existence of the
map.

If $n\le m$, the coordinate inclusion obtained by adjoining $m-n$ zero
coordinates maps $S^{2n-1}$ into $S^{2m-1}$ and satisfies
$f(\zeta z)=\phi(\zeta)f(z)$ for every $\zeta\in C_p$, which proves that it is
$\phi$-equivariant.
\end{proof}

This example is deliberately elementary, but it exhibits the two main
features of the method: the obstruction is relative to the homomorphism
$\phi$, and the relevant information is the page on which the Euler class is
killed.

\begin{remark}
The same existence criterion holds for $p=2$.  In that case
$H^*(BC_2;\F_2)=\F_2[t]$ with $\deg t=1$, the restriction of $c$ is $t^2$,
and the source transgression is $d_{2n}(v)=t^{2n}$.  The preceding argument
then applies without further changes.
\end{remark}

\subsection{Homomorphisms between tori and products of spheres}
\label{subsec:tori}

We begin by recalling the conventional equivariant-map problem that motivates
the present example.  In  \cite[Remark 5.3]{CoelhoMattosSantos2012}, the authors consider a example where $G=T^2=S^1\times S^1,$  $X=S^5\times S^5$ and $Y=S^3\times S^3,$ endowed with free $T^2$-actions, and justifying that there is no
$T^2$-equivariant map from $X$ to $Y$.  Their argument is formulated in
terms of the numerical index of a $G$-space.  Indeed,
$H^q(X;\Z)=0$ for $0<q<5$, which gives the corresponding lower bound for
the index of $X$, whereas $H^6(Y/T^2;\Z)=0$ because
$\dim(Y/T^2)=4$.  Since
$BT^2=\mathbb{C}P^\infty\times\mathbb{C}P^\infty$ has nontrivial
cohomology in degree six, the nonexistence conclusion follows from the
general criterion established in that paper.

Our purpose here is to recover this conclusion for the standard
coordinatewise actions and, more generally, to obtain a change-of-groups
and ideal-valued extension of the corresponding calculation.  Thus, we
allow the acting groups on the domain and codomain to be different tori,
and we replace ordinary equivariance by equivariance with respect to an
arbitrary continuous homomorphism between them.

For tuples of positive integers
$\mathbf n=(n_1,\ldots,n_r)$ and
$\mathbf m=(m_1,\ldots,m_s)$, we put
\[
 X_{\mathbf n}=\prod_{i=1}^r S^{2n_i-1}
 \qquad\text{and}\qquad
 Y_{\mathbf m}=\prod_{j=1}^s S^{2m_j-1}.
\]
Let $T^r=(S^1)^r$ act on $X_{\mathbf n}$ coordinatewise, with the
$i$-th circle acting by scalar multiplication on
$S^{2n_i-1}\subset\mathbb C^{n_i}$.  Analogously, let $T^s$ act
coordinatewise on $Y_{\mathbf m}$.  Both actions are free.

Every continuous homomorphism $\phi\colon T^r\to T^s$ is represented by
an integer matrix $A=(a_{ji})\in M_{s\times r}(\Z)$ through
\[
 \phi(z_1,\ldots,z_r)_j
   =\prod_{i=1}^r z_i^{a_{ji}},
\]
for $1\le j\le s.$ Writing $ H^*(BT^r;\Z)=\Z[x_1,\ldots,x_r]$ and  $H^*(BT^s;\Z)=\Z[y_1,\ldots,y_s],$ where all the generators have degree two, the homomorphism induced by
$B\phi$ is determined by
\[
 (B\phi)^*(y_j)=L_j
 :=\sum_{i=1}^r a_{ji}x_i,
\]
for $1\le j\le s.$ Since the coordinatewise actions are free, their Borel constructions are
homotopy equivalent to the corresponding orbit spaces, therefore
\[
 (X_{\mathbf n})_{T^r}
 \simeq X_{\mathbf n}/T^r
 \cong\prod_{i=1}^r\mathbb{C}P^{n_i-1},
\]
and an analogous description holds for $Y_{\mathbf m}$.  Consequently, we have $\Index_{T^r}(X_{\mathbf n};\Z)
   =(x_1^{n_1},\ldots,x_r^{n_r})$ and 
   $\Index_{T^s}(Y_{\mathbf m};\Z)
   =(y_1^{m_1},\ldots,y_s^{m_s}).$
In this setting, the finite-page information can also be computed
explicitly.

\begin{lemma}
\label{lem:torus-partial-indices}
For every integer $q\ge2$, one has
\[
 \cI_q^{T^r}(X_{\mathbf n};\Z)
 =
 \bigl(x_i^{n_i}\,;\, 2n_i<q\bigr).
\]
Analogously,
\[
 \cI_q^{T^s}(Y_{\mathbf m};\Z)
 =
 \bigl(y_j^{m_j}\,;\, 2m_j<q\bigr).
\]
\end{lemma}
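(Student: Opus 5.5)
We compute the Leray--Serre spectral sequence of $X_{\mathbf n}\to(X_{\mathbf n})_{T^r}\to BT^r$ explicitly, exploiting the product structure. The Borel fibration splits as a product of the $r$ fibrations
\[
S^{2n_i-1}\to S^{2n_i-1}\times_{S^1}ES^1\to BS^1 ,
\]
since $ET^r=\prod ES^1$ may be chosen and the action is coordinatewise. Because $T^r$ is connected, all local coefficient systems are trivial. Each $H^*(S^{2n_i-1};\Z)$ is free, so the Künneth theorem gives
\[
E_2\cong \Z[x_1,\ldots,x_r]\otimes\Lambda(u_1,\ldots,u_r),\qquad \deg u_i=(0,2n_i-1).
\]
As in the computation preceding Proposition~\ref{prop:spheres}, the transgression of $u_i$ is the Euler class of $n_i$ copies of the universal line bundle over the $i$-th factor. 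Hence $d_{2n_i}(u_i)=x_i^{n_i}$, and $u_i$ survives (as a $d$-cycle) to that page. The differentials are derivations, and the spectral sequence is the tensor product of the $r$ one-factor spectral sequences by naturality with respect to projections. So the whole sequence is determined by the multiplicative structure.

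The key step is to describe the pages inductively. Order the distinct values $2n_i$. Claim: for $q\ge2$,
\[
E_q\cong \bigl(\Z[x]/J_q\bigr)\otimes\Lambda\bigl(u_i:2n_i\ge q\bigr)\otimes(\text{higher rows}),
\]
where $J_q=(x_i^{n_i}:2n_i<q)$ and the bottom row $E_q^{*,0}$ is exactly $\Z[x]/J_q$. The bottom row is the only part needed. I would prove this by induction on $q$, treating each page as the cohomology of a Koszul-type complex. Between pages $q$ and $q+1$, the only nonzero differential is $d_q$, and only when $q=2n_i$ for some $i$. At such a page, the rows of positive height that hit the bottom row are generated by those $u_i$ with $2n_i=q$, multiplied by classes of the bottom row. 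Therefore the image in $E_q^{*,0}$ is the ideal generated by the $x_i^{n_i}$ with $2n_i=q$, and
\[
E_{q+1}^{*,0}=\Z[x]/\bigl(J_q+(x_i^{n_i}:2n_i=q)\bigr).
\]

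The main obstacle is to verify that nothing else reaches the bottom row at page $q$. A differential $d_q$ into row $0$ comes from row $q-1$. By the inductive description, classes in row $q-1$ are sums of (bottom class)$\cdot$(products of surviving $u_i$'s and possibly leftover torsion/quotient classes). By the Leibniz rule, the value of $d_q$ on a product $u_{i_1}\cdots u_{i_k}$ with $k\ge2$ lands in a positive row, not row $0$. A single $u_i$ can transgress only on page $2n_i$. To make this rigorous cleanly, I would avoid tracking all higher rows. Instead I would use an alternative argument for the bottom row: naturality along the projections $X_{\mathbf n}\to S^{2n_i-1}$, which are $T^r\to S^1$-equivariant, shows that $x_i^{n_i}\in\cI_{2n_i+1}$, giving $\supseteq$. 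For $\subseteq$, consider the subcomplex/Koszul description. The $E_q$-page bottom row receives differentials only from classes of fiber degree $q-1$, and by dimension the fiber cohomology generators of degree $\le q-2$... In practice I expect the tensor-product (Künneth for spectral sequences over $\Z$ with free $E_2$) argument to be the cleanest route: the $E_r$-page of a tensor product of spectral sequences with free terms is the tensor product of the pages up to Tor terms, and the bottom row is computed directly.

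The $Y_{\mathbf m}$ statement is identical with $y_j,m_j$ in place of $x_i,n_i$.
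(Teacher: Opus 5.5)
Your overall route is the paper's: split the Borel fibration as a product, identify $E_2\cong\Z[x_1,\ldots,x_r]\otimes\Lambda(u_1,\ldots,u_r)$, and use $d_{2n_i}(u_i)=x_i^{n_i}$ together with the Leibniz rule. The inclusion $\supseteq$ you do prove, and cleanly. The projection $X_{\mathbf n}\to S^{2n_i-1}$ is equivariant for $\operatorname{pr}_i\colon T^r\to S^1$. Hence Theorem~\ref{thm:filtered-monotonicity}, the fact that $(B\operatorname{pr}_i)^*$ sends the generator of $H^2(BS^1;\Z)$ to $x_i$, and the ideal property of $\cI_q$ together give $x_i^{n_i}\in\cI_q^{T^r}(X_{\mathbf n};\Z)$ whenever $2n_i<q$.

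The reverse inclusion is the real content of the lemma, and it is not proved. You name it as the main obstacle, and the sentence beginning ``by dimension'' breaks off. Your inductive description of $E_q$ carries an unspecified factor ``(higher rows)'' and ``possibly leftover torsion/quotient classes''. The K\"unneth route is invoked only ``up to Tor terms''. Leftover classes in positive rows are exactly what could hit the bottom row on a later page. Such classes do arise once the classes killed on a page fail to form a regular sequence, so this cannot be waved away.

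The missing idea is regularity. Suppose inductively that $E_q\cong\Z[x]/J_q\otimes\Lambda(u_i:2n_i\ge q)$ with your $J_q=(x_i^{n_i}:2n_i<q)$. This algebra is generated by the $x_i$ and these $u_i$, and $d_q$ is known on them by naturality. So $(E_q,d_q)$ is the Koszul complex of $\{x_i^{n_i}:2n_i=q\}$ over $\Z[x]/J_q$, tensored with $\Lambda(u_i:2n_i>q)$ carrying zero differential. Powers of distinct variables form a regular sequence on $\Z[x]/J_q$. Hence the Koszul homology is $\Z[x]/J_{q+1}$ in degree $0$ and vanishes elsewhere. This gives $E_{q+1}\cong\Z[x]/J_{q+1}\otimes\Lambda(u_i:2n_i\ge q+1)$, the induction closes, and the bottom row of $E_q$ is $\Z[x]/J_q$.

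Equivalently, on your K\"unneth route the Tor terms vanish. Every page of each one-factor spectral sequence is free abelian: it is $\Z[x_i]\otimes\Lambda(u_i)$ through page $2n_i$ and $\Z[x_i]/(x_i^{n_i})$ afterwards. So $E_q\cong\bigotimes_i E_q(i)$ on every page. The paper compresses this step into its appeal to the multiplicative structure. It is nevertheless precisely where the special form of the classes $x_i^{n_i}$ is used, and your proposal does not supply it.
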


\begin{proof}
The Borel spectral sequence associated with $X_{\mathbf n}$ is the tensor
product of the sphere-bundle spectral sequences corresponding to its
factors.  Its second page is
\[
 E_2^{*,*}
 =
 \Z[x_1,\ldots,x_r]\otimes
 \Lambda(u_1,\ldots,u_r),
\]
where $u_i$ is the fundamental cohomology class of
$S^{2n_i-1}$ and has degree $2n_i-1$.  After choosing orientations, the
transgressions associated with the sphere factors are
\[
 d_{2n_i}(u_i)=x_i^{n_i};
\]
see \cite{McCleary2001,MilnorStasheff1974}.  The multiplicative structure
and the derivation property of the differentials show that the relations
acquired in the bottom row before the $q$-th page are precisely those
generated by the classes $x_i^{n_i}$ satisfying $2n_i<q$.
\end{proof}

The limiting obstruction of
Theorem~\ref{thm:filtered-monotonicity} now has the following explicit
form.

\begin{proposition}
\label{prop:torus-obstruction}
If there exists a $\phi$-equivariant map $f : X_{\mathbf n}\rightarrow Y_{\mathbf m},$
then 
\[\bigl(L_1^{m_1},\ldots,L_s^{m_s}\bigr)
 \subseteq
 (x_1^{n_1},\ldots,x_r^{n_r})\]
 in $\Z[x_1,\ldots,x_r],$ where $L_j=\sum_{i=1}^r a_{ji}x_i$.  Consequently, no such map exists if
\[
 \left(\sum_{i=1}^r a_{ji}x_i\right)^{m_j}
 \notin
 (x_1^{n_1},\ldots,x_r^{n_r}),
\]
for some $j\in\{1,\ldots,s\}$.
\end{proposition}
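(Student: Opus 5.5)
The plan is to apply the limiting case $r=\infty$ of Theorem~\ref{thm:filtered-monotonicity}, with $R=\Z$, to the given $\phi$-equivariant map $f$, and then substitute the explicit indices computed above.

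First, we check the hypotheses. The spaces $X_{\mathbf n}$ and $Y_{\mathbf m}$ are path connected, because every factor $S^{2n_i-1}$ and $S^{2m_j-1}$ has positive dimension ($n_i,m_j\ge1$). They are finite equivariant CW complexes, so strong convergence holds. Theorem~\ref{thm:filtered-monotonicity} at $r=\infty$, combined with the remark identifying $\cI_\infty^K=\Index_K$, then gives
\[
(B\phi)^*\bigl(\Index_{T^s}(Y_{\mathbf m};\Z)\bigr)\subseteq\Index_{T^r}(X_{\mathbf n};\Z).
\]

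Second, we insert the computations $\Index_{T^s}(Y_{\mathbf m};\Z)=(y_1^{m_1},\ldots,y_s^{m_s})$ and $\Index_{T^r}(X_{\mathbf n};\Z)=(x_1^{n_1},\ldots,x_r^{n_r})$. Alternatively, these follow from Lemma~\ref{lem:torus-partial-indices} by taking $q$ larger than all $2n_i$ and $2m_j$, since the spectral sequences collapse after finitely many pages. Because $(B\phi)^*$ is a ring homomorphism with $(B\phi)^*(y_j)=L_j$, we get $(B\phi)^*(y_j^{m_j})=L_j^{m_j}$. The image of the ideal $(y_1^{m_1},\ldots,y_s^{m_s})$ consists of the elements $\sum_j (B\phi)^*(g_j)\,L_j^{m_j}$. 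Hence the inclusion above says that each $L_j^{m_j}$ lies in $(x_1^{n_1},\ldots,x_r^{n_r})$. Since the right-hand side is an ideal, this is equivalent to the stated inclusion of the ideal generated by the $L_j^{m_j}$ in $\Z[x_1,\ldots,x_r]$.

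The consequence is just the contrapositive. If some $L_j^{m_j}\notin(x_1^{n_1},\ldots,x_r^{n_r})$, then $y_j^{m_j}$ is a witness in $\cS_{\phi,\Z}^{(\infty)}$, and Corollary~\ref{cor:witness} rules out the map.

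The only step needing care is the identification of $(B\phi)^*$ with the map $y_j\mapsto L_j$, together with the claimed index computations. Both are stated earlier in the paper, so the argument is essentially formal.
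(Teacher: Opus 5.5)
Your proposal is correct and follows the paper's proof: you apply Theorem~\ref{thm:filtered-monotonicity} at the limiting page, identify $\cI_\infty$ with the Fadell--Husseini index, and substitute the computed index ideals together with $(B\phi)^*(y_j)=L_j$. Your observation that only the generators $L_j^{m_j}$ need to land in the target ideal is slightly more careful than the paper's written equality $(B\phi)^*(y_1^{m_1},\ldots,y_s^{m_s})=(L_1^{m_1},\ldots,L_s^{m_s})$. That equality holds only if the left side is read as the ideal generated by the image.
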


\begin{proof}
At the limiting page,
Theorem~\ref{thm:filtered-monotonicity} gives
\[
 (B\phi)^*
 \bigl(\Index_{T^s}(Y_{\mathbf m};\Z)\bigr)
 \subseteq
 \Index_{T^r}(X_{\mathbf n};\Z).
\]
Substituting the previously computed index ideals and using
$(B\phi)^*(y_j)=L_j$ yields
\[
 (B\phi)^*
 (y_1^{m_1},\ldots,y_s^{m_s})
 =
 (L_1^{m_1},\ldots,L_s^{m_s})
 \subseteq
 (x_1^{n_1},\ldots,x_r^{n_r}),
\]
as required.
\end{proof}

The page-wise obstruction gives a particularly simple numerical
consequence.

\begin{corollary}
\label{cor:torus-simple}
If $a_{ji}\ne0$ and $m_j<n_i$ for some pair $(j,i)$, then there is no
$\phi$-equivariant map
$X_{\mathbf n}\to Y_{\mathbf m}$.
\end{corollary}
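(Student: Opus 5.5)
The plan is to apply the limiting obstruction of Proposition~\ref{prop:torus-obstruction} directly. It is enough to show that, under the hypotheses $a_{ji}\neq0$ and $m_j<n_i$, the polynomial
\[
 L_j^{m_j}=\Bigl(\sum_{k=1}^r a_{jk}x_k\Bigr)^{m_j}\in\Z[x_1,\ldots,x_r]
\]
does not lie in the ideal $J=(x_1^{n_1},\ldots,x_r^{n_r})$. Proposition~\ref{prop:torus-obstruction} then rules out any $\phi$-equivariant map $X_{\mathbf n}\to Y_{\mathbf m}$.

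The key algebraic tool is the standard membership criterion for monomial ideals, which holds over any commutative coefficient ring, in particular over $\Z$. A polynomial lies in an ideal generated by monomials if and only if every monomial occurring in it with nonzero coefficient is divisible by one of the generators. The reason is that $J$ is spanned, as a $\Z$-module, by the monomials divisible by some $x_k^{n_k}$. So I will expand $L_j^{m_j}$ by the multinomial theorem and look at the single monomial $x_i^{m_j}$. It arises only from the term in which every factor contributes $a_{ji}x_i$, so its coefficient is $a_{ji}^{m_j}$. This is nonzero in $\Z$ because $a_{ji}\neq0$ and $\Z$ has no zero divisors.

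It remains to check that $x_i^{m_j}$ is divisible by no generator of $J$. For $k\neq i$, the variable $x_k$ does not occur in it, so $x_k^{n_k}$ does not divide it. For $k=i$, we have $m_j<n_i$, so $x_i^{n_i}$ does not divide it either. Hence $L_j^{m_j}\notin J$, and Proposition~\ref{prop:torus-obstruction} gives the conclusion.

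As an alternative that reflects the filtered viewpoint, one can set $q=2m_j+1$. Then Lemma~\ref{lem:torus-partial-indices} gives $y_j^{m_j}\in\cI_q^{T^s}(Y_{\mathbf m};\Z)$. The same monomial argument shows that $L_j^{m_j}$ is not in $(x_k^{n_k}\,;\,2n_k<q)$, which is contained in $J$. So $y_j^{m_j}$ is a $q$-page witness, and Corollary~\ref{cor:witness} applies. There is no serious obstacle in either route; the only point that needs care is justifying the monomial-ideal criterion over $\Z$ rather than over a field, and the $\Z$-basis argument above handles that.
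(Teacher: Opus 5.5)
Your proof is correct. Its algebraic core is the same as the paper's: the pure monomial $x_i^{m_j}$ occurs in $L_j^{m_j}$ with coefficient $a_{ji}^{m_j}\neq0$, and no $x_k^{n_k}$ divides it.

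The difference is which page you use. Your main route applies the limiting obstruction of Proposition~\ref{prop:torus-obstruction} with the full index $J=(x_1^{n_1},\ldots,x_r^{n_r})$. The paper's proof is exactly your alternative: it works at page $2m_j+1$, using Lemma~\ref{lem:torus-partial-indices} to get $y_j^{m_j}\in\cI_{2m_j+1}^{T^s}(Y_{\mathbf m};\Z)$ and $\cI_{2m_j+1}^{T^r}(X_{\mathbf n};\Z)=(x_k^{n_k}\,;\,n_k\le m_j)$, and then invokes Corollary~\ref{cor:witness}.

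Your route needs only the Fadell--Husseini indices, which can be read off from the orbit spaces $\prod_i\mathbb{C}P^{n_i-1}$. It does not need the page-by-page computation of the lemma, whose proof in the paper is only sketched. The paper's route instead shows the page on which the incompatibility first becomes visible, which is what this corollary is meant to illustrate.

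Nothing is lost either way. The bottom row in degree $n$ receives no nonzero differentials after $d_n$, so $E_{n+1}^{n,0}=E_\infty^{n,0}$. Hence a homogeneous limiting witness of degree $n$ is automatically an $(n+1)$-page witness. Your remark that $L_j^{m_j}\notin J$ already excludes it from the smaller page ideal is one instance of this.

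Your explicit justification of the monomial-ideal membership criterion over $\Z$ also supplies a step the paper leaves implicit.
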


\begin{proof}
By Lemma~\ref{lem:torus-partial-indices}, we have $ y_j^{m_j}
 \in
 \cI_{2m_j+1}^{T^s}(Y_{\mathbf m};\Z),$ while its pullback is
\[
 (B\phi)^*(y_j^{m_j})
 =
 L_j^{m_j}
 =
 \left(\sum_{k=1}^r a_{jk}x_k\right)^{m_j}.
\]
Since $a_{ji}\ne0$, this polynomial contains the pure monomial
$a_{ji}^{m_j}x_i^{m_j}$ with nonzero coefficient.  On the other hand,
\[
 \cI_{2m_j+1}^{T^r}(X_{\mathbf n};\Z)
 =
 \bigl(x_k^{n_k}\,;\, n_k\le m_j\bigr).
\]
Because $m_j<n_i$, the monomial $x_i^{m_j}$ is not divisible by any
generator of this monomial ideal.  It follows that
\[
 L_j^{m_j}
 \notin
 \cI_{2m_j+1}^{T^r}(X_{\mathbf n};\Z).
\]
Therefore $y_j^{m_j}$ determines an element of
\[
 \cS_{\phi,\Z}^{(2m_j+1)}
 \bigl((Y_{\mathbf m})_{T^s},
       (X_{\mathbf n})_{T^r}\bigr),
\]
and the conclusion follows from Corollary~\ref{cor:witness}.
\end{proof}

We next explain precisely how the conventional torus example is recovered.

\begin{example}
\label{ex:torus-conventional}
Take $r=s=2$, $\phi=\id_{T^2}$,
$\mathbf n=(3,3)$, and $\mathbf m=(2,2)$.  Then
\[
 X_{\mathbf n}=S^5\times S^5
 \qquad\text{and}\qquad
 Y_{\mathbf m}=S^3\times S^3.
\]
Since $\phi$ is the identity, one has $L_1=x_1$ and $L_2=x_2$.
Proposition~\ref{prop:torus-obstruction} would require
\[
 (x_1^2,x_2^2)\subseteq(x_1^3,x_2^3)
\]
if a $T^2$-equivariant map existed.  This inclusion is false because,
for example, $x_1^2\notin(x_1^3,x_2^3).$ Hence there is no $T^2$-equivariant map $S^5\times S^5\rightarrow S^3\times S^3,$ for the standard coordinatewise actions.

The filtered obstruction describes more precisely where this failure
occurs, since by Lemma~\ref{lem:torus-partial-indices},
\[
 \cI_5^{T^2}(S^3\times S^3;\Z)
   =(y_1^2,y_2^2)\,\,
 \text{ and }\,\,
 \cI_5^{T^2}(S^5\times S^5;\Z)=0.
\]
Thus $y_1^2$, for instance, has already disappeared from the bottom row
of the target spectral sequence, while its pullback
$(B\phi)^*(y_1^2)=x_1^2$ still survives in the source spectral sequence.
Equivalently, the relevant transgressions are $d_4(v_j)=y_j^2$
 for $S^3\times S^3$ and $d_6(u_i)=x_i^3$
for $S^5\times S^5.$ Consequently, $y_1^2$ is a fifth-page witness, and
Corollary~\ref{cor:witness} obstructs the equivariant map before the
limiting page is reached.

This recovers, for the coordinatewise actions, the nonexistence conclusion
of the torus example in
\cite{CoelhoMattosSantos2012}.  The present calculation refines that
conclusion by identifying both a specific obstructing class and the first
page on which the incompatibility is visible.
\end{example}

The following example illustrates the genuine change-of-groups situation.

\begin{example}\label{ex:torus-change-of-groups}
Let $r=2$, $s=1$, $\mathbf n=(3,3)$, and $\mathbf m=(2)$, and consider the homomorphism $\phi: T^2\rightarrow S^1,$ give by  $\phi(z_1,z_2)=z_1z_2.$ The induced homomorphism in cohomology satisfies $ (B\phi)^*(y)=x_1+x_2.$ 

If there existed a $\phi$-equivariant map $S^5\times S^5\rightarrow S^3,$ then Proposition~\ref{prop:torus-obstruction} would imply $(x_1+x_2)^2\in(x_1^3,x_2^3).$ But this is impossible, since
\[
 (x_1+x_2)^2=x_1^2+2x_1x_2+x_2^2
\]
does not belong to $(x_1^3,x_2^3)$.

Notice that the acting groups have different ranks in this example.
Although the $S^1$-action on $S^3$ is free, its pullback along $\phi$ is
a $T^2$-action with nontrivial kernel and is therefore not free.  Thus this
example is not covered directly by a result formulated only for free
$G$-spaces acted upon by the same group.
\end{example}

\end{document}